\newcommand{\Spec}{\operatorname{Spec}}
\renewcommand{\phi}{\varphi}
\newcommand{\Ker}{\operatorname{Ker}}
\newcommand{\Max}{\operatorname{Max}}
\newcommand{\dg}{\operatorname{deg}}
\newtheorem{proposition}{Proposition}[section]
\newtheorem{lemma}[proposition]{Lemma}
\newtheorem{corollary}[proposition]{Corollary}
\newtheorem{theorem}[proposition]{Theorem}
\theoremstyle{definition}
\newtheorem{example}[proposition]{Example}
\newtheorem{remark}[proposition]{Remark}
\patchcmd{\@settitle}{\uppercasenonmath\@title}{}{}{}
\patchcmd{\@setauthors}{\MakeUppercase}{}{}{}
\begin{document}

\title[Grothendieck group applications]{Grading of homogeneous localization by the Grothendieck group}

\author[A. Tarizadeh]{Abolfazl Tarizadeh}
\address{Department of Mathematics, Faculty of Basic Sciences, University of Maragheh, Maragheh, East Azerbaijan Province, Iran.}
\email{ebulfez1978@gmail.com}

\date{}
\subjclass[2020]{13A02, 13D15, 16S34, 13B30, 20K20, 20K15}
\keywords{Grothendieck group of a ommutative monoid; Graded ring; Homogeneous localization; Monoid-ring; Group-ring; Totally ordered monoid; Torsion-free group}

\begin{abstract} The main result of this article is a fantastic generalization of a classical result in graded ring theory. In fact, our result states that if $S$ is a multiplicative set of homogeneous elements of an $M$-graded commutative ring $R=\bigoplus\limits_{m\in M}R_{m}$ with $M$ a commutative monoid, then the localization ring $S^{-1}R=\bigoplus\limits_{x\in G}(S^{-1}R)_{x}$ is a $G$-graded ring where $G$ is the Grothendieck group of $M$ and each homogeneous component $(S^{-1}R)_{x}$ is the set of all fractions $f\in S^{-1}R$ such that $f=0$ or it is of the form $f=r/s$ where $r$ is a homogeneous element of $R$ and $x=[\dg(r),\dg(s)]$. \\ 
The above result is a natural generalization of the related result in the $\mathbb{N}$-graded case to the more general case of $M$-graded rings. However, the proof of this theorem is more subtle and not the same as the proof of the $\mathbb{N}$-graded case. \\ 
As an application, we effectively bridge the gap between monoid algebra and group algebra using the tool of localization. In fact, we show that there is a canonical isomorphism of ($G$-graded) rings $S^{-1}(R[M])\simeq R[G]$ where $S=\{\epsilon_{m}: m\in M\}$ is a multiplicative set of the monoid-ring $R[M]$ with $\epsilon_{m}=(\delta_{m,n})_{n\in M}$ is a (positional) homogeneous element of $R[M]$ and $\delta_{m,n}$ is the Kronecker delta. \\
In particular, the ring of Laurent polynomials $R[x_{1}^{\pm1},\ldots,x_{n}^{\pm1}]$ is canonically isomorphic to the group-ring $R[\mathbb{Z}^{n}]$ which is just a tiny slice of a much more universal algebraic behavior.  
\end{abstract}

\maketitle

\section{Introduction and Preliminaries}

The Grothendieck group of a commutative monoid is a fundamental construction of mathematics. In fact, several of the basic structures of mathematics derive from Grothendieck groups. For instance, the additive group of integers is indeed the Grothendieck group of the additive monoid of natural numbers. The role of Grothendieck group in some other advanced structures of mathematics is very influential. In this article, we show that the Grothendieck group also plays an important role in graded ring theory. 

It is well known that for any multiplicative set $S$ of homogeneous elements of an $\mathbb{N}$-graded commutative ring $R=\bigoplus\limits_{n\geqslant0}R_{n}$, then the localization ring  $S^{-1}R=\bigoplus\limits_{n\in \mathbb{Z}}(S^{-1}R)_{n}$ is a $\mathbb{Z}$-graded ring where each homogeneous component $(S^{-1}R)_{n}$ is an additive subgroup of $S^{-1}R$ consisting of all fractions $f\in S^{-1}R$ such that $f=0$ or it is of the form $f=r/s$ where $r$ is a homogeneous element of $R$ and $\dg(r)-\dg(s)=n$.

This result is widely used in commutative algebra and algebraic geometry, especially for the Proj construction. 

In this article, we generalize this result to the more general setting of $M$-graded commutative rings where $M$ is a commutative monoid. One of the main motivations for this generalization stems from the fact that for any commutative ring $R$ the monoid ring $R[M]$ is an $M$-graded commutative ring.
 
In fact, our generalization reads as follows. If  $S$ is a multiplicative set of homogeneous elements of an $M$-graded ring $R=\bigoplus\limits_{m\in M}R_{m}$, then the localization ring $S^{-1}R=\bigoplus\limits_{x\in G}(S^{-1}R)_{x}$ is a $G$-graded ring where $G$ is the Grothendieck group of the commutative monoid $M$, and each homogeneous component $(S^{-1}R)_{x}$ is an additive subgroup of $S^{-1}R$ consisting of all fractions $f\in S^{-1}R$ such that $f=0$ or it is of the form $f=r/s$ where $r$ is a homogeneous element of $R$ and $x=[\dg(r),\dg(s)]$.  

As mentioned earlier, the above result is a natural generalization of the related result in the $\mathbb{N}$-graded case to the more general case of $M$-graded rings. However, the proof of this theorem is slightly technical and is not the same as the proof of the $\mathbb{N}$-graded case.  
In fact, this result is proven in two main steps. In the first step (Lemma \ref{Lemma 1-bir}), we prove the assertion for cancellative monoids. In the second step (Theorem \ref{Theorem 20 altin}), by changing the grading we reduce the problem to the cancellative case and then using the first step, the proof is completed. 

Next, we apply this result for some certain localizations of monoid rings. For instance, we show that the localization of the monoid-ring $R[M]$ with respect to the multiplicative set $\{\epsilon_{m}: m\in M\}$ is canonically isomorphic to the group-ring $R[G]$ where $G$ is the Grothendieck group of $M$. In particular, the following well known result is recovered which states that the ring of Laurent polynomials $R[x,x^{-1}]$ is canonically isomorphic to the group-ring $R[\mathbb{Z}]$.

In Theorem \ref{Theorem III} we prove that a cancellative monoid is a totally ordered monoid if and only if its Grothendieck group is a torsion-free group. We learned that a classical version of this theorem can be found in Northcott \cite[\S 2.12, Theorem 22]{Northcott}. However, we obtained this result independently (especially, our proof is completely different and much shorter than Northcott's approach). Then as an application, Levi’s famous theorem (which states that every torsion-free Abelian group is a totally ordered group) is easily deduced. 

Finally, we show that the group of units of any  localization ring $S^{-1}R$ is canonically isomorphic to the Grothendieck group of the saturation of $S$. In particular, if $\mathfrak{p}$ is a prime ideal of a ring $R$ then the Grothendieck group of the multiplicative monoid $R\setminus\mathfrak{p}$ is canonically isomorphic to the group of units of the local ring $R_{\mathfrak{p}}$. As another application, the Grothendieck group of the set of non-zero-divisors of $R$ is canonically isomorphic to the group of units of the total ring of fractions of $R$. 

In this article, all monoids and rings are assumed
to be commutative. Recall that the Grothendieck group of a commutative monoid $M$ is constructed in the following way. We first define a relation over the set $M\times M$ as $(a,b)\sim(c,d)$ if there exists some $m\in M$ such that $(a+d)+m=(b+c)+m$. It can be seen that this is an equivalence relation. Here we denote the equivalence class containing of an ordered pair $(a,b)\in M\times M$ simply by $[a,b]$, and we denote by $G=\{[a,b]: a,b\in M\}$  the set of all equivalence classes obtained by this relation. Then the set $G$ by the operation $[a,b]+[c,d]=[a+c,b+d]$ is an Abelian group. Indeed, $[0,0]$ is the identity element of $G$ where $0$ is the identity of $M$, and for each $[a,b]\in G$ its inverse is $[b,a]$. The group $G$ is called the \emph{Grothendieck group of $M$}. Note that in the above construction, the commutativity of the monoid $M$ plays a vital role. 

The canonical map $f:M\rightarrow G$ given by $m\mapsto[m,0]$ is a morphism of monoids and the pair $(G,f)$ satisfies in the following universal property: for any such pair $(H,g)$, i.e., $H$ is an abelian group and $g:M\rightarrow H$ is a morphism of monoids, then there exists a unique morphism of groups $h:G\rightarrow H$ such that $g=h\circ f$. In fact, $h([a,b])=g(a)-g(b)$. 

The canonical map $f:M\rightarrow G$ is injective if and only if $M$ has the cancellation property. 
Note that the Grothendieck group of a commutative monoid $M$ is trivial if and only if $M=M^{0}$ where we call $M^{0}=\{x\in M:\exists y\in M, x+y=y\}$ the quasi-zero submonoid of $M$. For example, the Grothendieck group of the multiplicative monoid of natural numbers $\mathbb{N}=\{0,1,2,\ldots\}$ is trivial. 

\section{Grading of homogeneous localization}

Let $R=\bigoplus\limits_{m\in M}R_{m}$ be an $M$-graded ring with $M$ a (commutative) monoid. First recall that every nonzero element $r$ of the additive subgroup $R_{m}$ is called a homogeneous element of $R$ of degree $m\in M$ and written $\dg(r)=m$. If $r,r'\in R$ are homogeneous elements of $R$ with $rr'\neq0$, then $rr'$ is a homogeneous element of $R$ of degree $\dg(rr')=\dg(r)+\dg(r')$.

Next, let $S$ be a multiplicative set of homogeneous elements of $R$. Let $G=\{[a,b]: a,b\in M\}$ be the Grothendieck group of $M$. For each $x\in G$, by $(S^{-1}R)_{x}$ we mean the set of all fractions $f\in S^{-1}R$ such that $f=0$ or it is of the form $f=r/s$ where $r$ is a homogeneous element of $R$ and $x=[\dg(r),\dg(s)]$. 

Then we prove the first main result of this article which asserts that every homogeneous localization of a graded ring is graded by the Grothendieck group with the homogeneous components $(S^{-1}R)_{x}$. To achieve this goal, we first need to treat the cancellative case:

\begin{lemma}\label{Lemma 1-bir} Let $S$ be a multiplicative set of homogeneous elements of an $M$-graded ring $R=\bigoplus\limits_{m\in M}R_{m}$ with $M$ a cancellative monoid. Then $S^{-1}R=\bigoplus\limits_{x\in G}(S^{-1}R)_{x}$ is a $G$-graded ring where $G$ is the Grothendieck group of $M$. 
\end{lemma}

\begin{proof} We have $0\notin S$ and so $st\neq0$ for all $s,t\in S$. We first show that $(S^{-1}R)_{x}$ is an additive subgroup of $S^{-1}R$ for all $x\in G$.
Clearly $(S^{-1}R)_{x}$ is nonempty, because by its definition, $0/1\in(S^{-1}R)_{x}$. Take $f,g\in(S^{-1}R)_{x}$. We will show that $-f$ and $f+g$ are members of $(S^{-1}R)_{x}$. We may assume that $f,g\neq0$. Then $f=r/s$ and $g=r'/s'$ where $r,r'\in R$ are homogeneous elements with 
$[\dg(r),\dg(s)]=x=[\dg(r'),\dg(s')]$. It is clear that $-r$ is homogeneous with $\dg(-r)=\dg(r)$ and so
$[\dg(-r),\dg(s)]=x$. This shows that $-f=(-r)/s\in(S^{-1}R)_{x}$. Both $rs'$ and $r's$ are nonzero and hence are homogeneous. We show that $\dg(rs')=\dg(r's)$. Since $M$ has the cancellation property, the canonical map $M\rightarrow G$ given by $m\mapsto[m,0]$ is injective. The images of both $\dg(rs')=\dg(r)+\dg(s')$ and $\dg(r's)=\dg(r')+\dg(s)$ under this map are the same. Hence, $\dg(rs')=\dg(r's)$.
We may assume $f+g\neq0$. Thus $rs'+r's\neq0$ and so it is homogenous of degree $\dg(rs')$. Then  $[\dg(rs'+r's),\dg(ss')]=[\dg(r)+\dg(s'),\dg(s)+\dg(s')]=
[\dg(r),\dg(s)]+[\dg(s'),\dg(s')]=x$. This shows that $f+g=(rs'+r's)/ss'\in(S^{-1}R)_{x}$. Hence, $(S^{-1}R)_{x}$ is an additive subgroup of $S^{-1}R$. \\
If $f\in(S^{-1}R)_{x}$ and $g\in(S^{-1}R)_{y}$ for some $x,y\in G$, then we show that $fg\in(S^{-1}R)_{x+y}$. We may assume $fg\neq0$. Then $f=r/s$ and $g=r'/s'$ where $r,r'\in R$ are homogeneous elements with 
$[\dg(r),\dg(s)]=x$ and $[\dg(r'),\dg(s')]=y$.
We also have $rr'\neq0$ and so it is a homogeneous element. Thus $[\dg(rr'),\dg(ss')]=
[\dg(r)+\dg(r'),\dg(s)+\dg(s')]=x+y$. \\
It can be easily seen that $S^{-1}R=\sum\limits_{x\in G}(S^{-1}R)_{x}$. 
We need to show that this is a direct sum. Take some $f\in(S^{-1}R)_{x}\cap\sum
\limits_{\substack{y\in G,\\y\neq x}}(S^{-1}R)_{y}$. Suppose $f\neq0$. We may write $f=\sum\limits_{k=1}^{n}f_{k}$ where $0\neq f_{k}\in(S^{-1}R)_{y_{k}}$ for all $k$, and $x\in G\setminus\{y_{1},\ldots,y_{n}\}$. Each $f_{k}=r_{k}/s_{k}$ where $r_{k}$ is a homogeneous element of $R$ with $y_{k}=[\dg(r_{k}),\dg(s_{k})]$. 
We also have $f=r/s$ where $r$ is a homogeneous element of $R$ with $[\dg(r),\dg(s)]=x$.
Then there exists some $t\in S$ such that $rs't=\sum\limits_{k=1}^{n}r_{k}t_{k}st$ where $s':=\prod\limits_{i=1}^{n}s_{i}$ and each $t_{k}:=\prod\limits_{\substack{i=1,\\i\neq k}}^{n}s_{i}$. Note that $rs't$ and each $r_{k}t_{k}st$ are nonzero, and so they are homogeneous elements of $R$.
We claim that if $k\neq d$ for $k,d\in\{1,\ldots,n\}$, then $\dg(r_{k}t_{k}st)\neq\dg(r_{d}t_{d}st)$. Indeed, if $\dg(r_{k}t_{k}st)=\dg(r_{d}t_{d}st)$ then $\dg(r_{k})+\dg(s_{d})+\sum\limits_{\substack{i=1,\\i\neq d,k}}^{n}\dg(s_{i})+\dg(st)=\dg(r_{d})+\dg(s_{k})+
\sum\limits_{\substack{i=1,\\i\neq d,k}}^{n}\dg(s_{i})+\dg(st)$.
Using the cancellation property of $M$, we get that
$\dg(r_{k})+\dg(s_{d})=\dg(r_{d})+\dg(s_{k})$. It follows that $y_{k}=y_{d}$ which is a contradiction (the $y_{k}$ are pairwise distinct). This establishes the claim. Thus $rs't=r_{k}t_{k}st$ for some $k$. It follows that $\dg(r)+\dg(s_{k})+\sum\limits_{\substack{i=1,\\i\neq k}}^{n}\dg(s_{i})+\dg(t)=\dg(s)+\dg(r_{k})+
\sum\limits_{\substack{i=1,\\i\neq k}}^{n}\dg(s_{i})+\dg(t)$. Then using the cancellation property of $M$ once more, we obtain that $\dg(r)+\dg(s_{k})=\dg(s)+\dg(r_{k})$. This yields that $x=y_{k}$ which is a contradiction. Hence, $f=0$. This completes the proof.
\end{proof} 

The Grothendieck group of the additive monoid of natural numbers $\mathbb{N}=\{0,1,2,\ldots\}$ is called the \emph{additive group of integers} and is denoted by $\mathbb{Z}$. This allows us to define the integers (especially the negative integers) in a quite formal way. In fact, $n:=[n,0]$ and so $-n=[0,n]$ and $[m,n]=m-n$ for all $m,n\in\mathbb{N}$. \\

As a first application, the following well known result is recovered:

\begin{corollary}\label{Coro 11 onbir} If $S$ is a multiplicative set of homogeneous elements of an $\mathbb{N}$-graded ring $R=\bigoplus\limits_{n\geqslant0}R_{n}$ then $S^{-1}R=\bigoplus\limits_{n\in\mathbb{Z}}(S^{-1}R)_{n}$ is a $\mathbb{Z}$-graded ring where  $(S^{-1}R)_{n}=\{r/s\in S^{-1}R: r\in R_{n+\dg(s)}\}$ and $R_{d}=0$ for all $d<0$. 
\end{corollary}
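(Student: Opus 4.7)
The plan is to obtain this corollary as a direct specialization of Theorem \ref{Lemma 1-bir}, with $M$ taken to be the additive monoid $\mathbb{N}=\{0,1,2,\ldots\}$. First I would check that the hypotheses of Theorem \ref{Lemma 1-bir} are satisfied in this setting: $\mathbb{N}$ is a commutative monoid under addition, and it is cancellative since $m+k=n+k$ in $\mathbb{N}$ implies $m=n$. Therefore the theorem applies and gives a $G$-grading on $S^{-1}R$, where $G$ is the Grothendieck group of $\mathbb{N}$.

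Next I would invoke the identification $G=\mathbb{Z}$ recorded in the paragraph immediately preceding the corollary, namely $[m,n]=m-n$ for $m,n\in\mathbb{N}$. Under this identification, the homogeneous component $(S^{-1}R)_{x}$ attached to $x=[\dg(r),\dg(s)]\in G$ by Theorem \ref{Lemma 1-bir} coincides precisely with the component $(S^{-1}R)_{n}$ defined just before the corollary via the condition $\dg(r)-\dg(s)=n$. Hence the decomposition $S^{-1}R=\bigoplus_{x\in G}(S^{-1}R)_{x}$ furnished by the theorem translates verbatim into $S^{-1}R=\bigoplus_{n\in\mathbb{Z}}(S^{-1}R)_{n}$, and the multiplicative compatibility $(S^{-1}R)_{x}\cdot(S^{-1}R)_{y}\subseteq(S^{-1}R)_{x+y}$ becomes the usual $(S^{-1}R)_{m}\cdot(S^{-1}R)_{n}\subseteq(S^{-1}R)_{m+n}$ for $m,n\in\mathbb{Z}$.

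There is no substantive obstacle here; the only thing worth spelling out is the bijection between the two descriptions of the homogeneous components, which is immediate once one notes that $\dg(r)-\dg(s)=n$ in $\mathbb{Z}$ is the same condition as $[\dg(r),\dg(s)]=n$ under the canonical isomorphism $G\simeq\mathbb{Z}$. Thus the corollary reduces to a reformulation of Theorem \ref{Lemma 1-bir} in the special case $M=\mathbb{N}$.
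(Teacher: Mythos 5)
Your proposal is correct and matches the paper's approach exactly: the paper also derives this corollary as an immediate consequence of Theorem \ref{Lemma 1-bir} applied to the cancellative monoid $\mathbb{N}$, whose Grothendieck group is $\mathbb{Z}$. You merely spell out the identification of the homogeneous components, which the paper leaves implicit.
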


\begin{proof} The additive monoid of natural numbers $\mathbb{N}=\{0,1,2,\ldots\}$ has the cancellation property, and hence the assertion follows from Lemma \ref{Lemma 1-bir}.
\end{proof}

To prove the next main result we also need to recall the changing of grading. Let $R=\bigoplus\limits_{m\in M}R_{m}$ be an $M$-graded ring and let $\phi:M\rightarrow M'$ be a morphism of monoids. Then $R=\bigoplus\limits_{d\in M'}S_{d}$ is also an $M'$-graded ring with the homogeneous components $S_{d}=\sum\limits_{\substack{m\in M,\\
\phi(m)=d}}R_{m}$ (it is clear that if $d\in M'$ is not in the image of $\phi$, then $S_{d}=0$).

\begin{remark}\label{Remark iv dort} Let $\{H_{x}\}$ be a family of subgroups of an Abelian group $G$ with the direct sum decomposition $G=\bigoplus\limits_{x}H_{x}$. Assume for each $x$, $S_{x}$ is a subset of $H_{x}$, and every element $f\in G$ can be written as a (finite) sum $f=\sum\limits_{x}f_{x}$ with $f_{x}\in S_{x}$ for all $x$. Then $S_{x}=H_{x}$ for all $x$. Indeed, if $f\in H_{y}$ for some $y$, then $f-f_{y}=\sum\limits_{x\neq y}f_{x}\in H_{y}\cap\sum\limits_{x\neq y}H_{x}=0$. Hence, $f=f_{y}\in S_{y}$.
\end{remark}
 
We are now ready to prove Lemma \ref{Lemma 1-bir} for arbitrary monoids (not necessarily cancellative): 

\begin{theorem}\label{Theorem 20 altin} Let $S$ be a multiplicative set of homogeneous elements of an $M$-graded ring $R=\bigoplus\limits_{m\in M}R_{m}$ with $M$ a monoid. Then $S^{-1}R=\bigoplus\limits_{x\in G}(S^{-1}R)_{x}$ is a $G$-graded ring where $G$ is the Grothendieck group of $M$. 
\end{theorem}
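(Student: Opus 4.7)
The plan is to reduce to the cancellative case already handled in Theorem \ref{Lemma 1-bir}. Define on $M$ the relation $m \sim m'$ iff there exists $k \in M$ with $m+k = m'+k$; this is a monoid congruence, and the quotient $M' = M/\sim$ is cancellative. The canonical map $M \to G$ factors through $M'$ and identifies elements precisely according to $\sim$, so by the universal property $M$ and $M'$ share the Grothendieck group $G$ canonically. The original $M$-grading of $R$ coarsens to an $M'$-grading $R = \bigoplus_{[m] \in M'} R'_{[m]}$ with $R'_{[m]} = \bigoplus_{m' \sim m} R_{m'}$, and every $M$-homogeneous element is $M'$-homogeneous; in particular $S$ remains a multiplicative set of $M'$-homogeneous elements.

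Applying Theorem \ref{Lemma 1-bir} to the $M'$-grading yields a $G$-graded decomposition $S^{-1}R = \bigoplus_{x \in G} H_x$, where $H_x$ is the analogous component defined using $M'$-homogeneous presentations. An $M$-homogeneous presentation is a fortiori $M'$-homogeneous with the same Grothendieck-group degree, so $(S^{-1}R)_x \subseteq H_x$ for every $x \in G$. To upgrade this inclusion to equality and obtain simultaneously the direct sum decomposition, I will invoke Remark \ref{Remark iv dort} applied to the subgroups $\{H_x\}$ and the subsets $\{(S^{-1}R)_x\}$: it suffices to exhibit every $f \in S^{-1}R$ as a finite sum of elements drawn from the $(S^{-1}R)_x$. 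Taking any presentation $f = r/s$ with $s \in S$ (hence $M$-homogeneous) and decomposing $r = \sum_m r_m$ into its $M$-homogeneous components supplies $f = \sum_m r_m/s$, with each summand lying in $(S^{-1}R)_{[\dg(r_m),\dg(s)]}$. Remark \ref{Remark iv dort} then gives $(S^{-1}R)_x = H_x$, which is an additive subgroup and yields the direct sum decomposition.

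The multiplicative compatibility $(S^{-1}R)_x \cdot (S^{-1}R)_y \subseteq (S^{-1}R)_{x+y}$ is then immediate: given $M$-homogeneous presentations $f = r/s$ and $g = r'/s'$ with $fg \neq 0$, the product $rr'$ is nonzero and hence $M$-homogeneous of degree $\dg(r)+\dg(r')$, yielding $[\dg(rr'),\dg(ss')] = x+y$. The principal obstacle the reduction avoids is a direct verification that $(S^{-1}R)_x$ is closed under addition: the standard computation $(r/s)+(r'/s') = (rs'+r's)/(ss')$ may produce a numerator whose two $M$-homogeneous summands have distinct degrees in $M$ (they coincide only modulo the congruence $\sim$), so the sum need not be $M$-homogeneous; funneling the argument through the cancellative quotient $M'$ together with Remark \ref{Remark iv dort} neatly bypasses this combinatorial obstruction.
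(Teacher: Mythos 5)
Your proposal is correct and follows essentially the same route as the paper: the paper also reduces to Theorem \ref{Lemma 1-bir} by coarsening the grading along the cancellative quotient (realized there as the image $M'=\{[m,0]:m\in M\}$ of $M$ in $G$, which is canonically your $M/\sim$), and then upgrades the resulting components to the $(S^{-1}R)_{x}$ via Remark \ref{Remark iv dort} using exactly the decomposition $f=\sum_{m}r_{m}/s$.
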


\begin{proof} To prove the assertion, we reduce the problem to the cancellative case and then apply Lemma \ref{Lemma 1-bir}. Let $M'=\{[m,0]: m \in M\}$ be the image of the canonical map $M\rightarrow G$. Then $M'$ is a submonoid of the group $G$ and hence $M'$ is cancellative. It can be easily seen that the Grothendieck group of $M'$ is canonically isomorphic to $G$ (in fact, the Grothendieck group of every submonoid $N$ of $G$ with $M'\subseteq N\subseteq G$ is isomorphic to $G$). By changing the grading, we can view $R=\bigoplus\limits_{x\in M'}T_{x}$ as an $M'$-graded ring with the homogeneous components $T_{x}=\sum\limits_{[m,0]=x}R_{m}$. Every element $s\in S$ is also homogeneous in this new grading of $R$, because $s\in R_{\dg(s)}\subseteq T_{[\dg(s),0]}$. Then by Lemma \ref{Lemma 1-bir}, $S^{-1}R=\bigoplus\limits_{x\in G}B_{x}$ is a $G$-graded ring where each $B_{x}$ is an additive subgroup of $S^{-1}R$ consisting of all fractions $f\in S^{-1}R$ such that $f=0$ or it is of the form $f=r/s$ where $r$ is a homogeneous element of $R$ in this new grading (i.e., $r\in T_{y}$ for some $y=[m,0]\in M'$) and $x=y-[\dg(s),0]=[m,\dg(s)]$. 
For each $x\in G$ the set $(S^{-1}R)_{x}$ is contained in $B_{x}$. It is also clear that each element $f\in S^{-1}R$ can be written as a (finite) sum $f=\sum\limits_{x\in G}f_{x}$ with $f_{x}\in(S^{-1}R)_{x}$ for all $x$. Then by Remark \ref{Remark iv dort}, we have $(S^{-1}R)_{x}=B_{x}$ for all $x\in G$. 
\end{proof}

Note that, under the assumptions of the above result, each homogeneous component $(S^{-1}R)_{x}$ is also the set of all fractions $f=r/s\in S^{-1}R$ such that $f=0$ or $r \in\bigcup\limits_{\substack{m\in M,
\\ [m,\dg(s)]=x}}R_{m}$.

\begin{remark}\label{Remark iii uch} Let $R=\bigoplus\limits_{m\in M}R_{m}$ be an $M$-graded ring with $M$ a  monoid. If $M$ is cancellative, then it can be seen that $1\in R_{0}$
where 0 is the identity element of $M$ (in other words, $R_{0}$ is a subring of $R$). But in general (when $M$ is not necessarily cancellative), this may not happen. In fact in general, the unit of $R$ is an element of the subring $\bigoplus\limits_{m\in M^{0}}R_{m}$ where $M^{0}=\{x\in M:\exists y\in M, x+y=y\}$ is the quasi-zero submonoid of $M$ (even if 1 is homogeneous, it may happen that $\dg(1)\neq0$).
\end{remark}

\begin{corollary}\label{Coro 10 on} Let $S$ be a multiplicative set of homogeneous elements of an $M$-graded ring $R=\bigoplus\limits_{m\in M}R_{m}$ with $M$ a monoid. If $G$ is the Grothendieck group of $M$, then $L=\{[m,\dg(s)]\in G: m\in M, s\in S\}$ is a submonoid of $G$ containing the image of $M$, and $S^{-1}R=\bigoplus\limits_{x\in L}(S^{-1}R)_{x}$ is an $L$-graded ring. 
\end{corollary}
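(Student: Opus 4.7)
The plan is to obtain this corollary as a direct refinement of Theorem \ref{Theorem 20 altin}: rather than reworking the proof, I would show that only the indices $x \in L$ can support nonzero homogeneous components, so the $G$-grading already established collapses to an $L$-grading. The work splits into two essentially independent checks.

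First, I would verify that $L$ is a submonoid of $G$ containing the image of $M$. Fix any $s_{0}\in S$ (the multiplicative set $S$ is nonempty and does not contain $0$). Then $[\dg(s_{0}),\dg(s_{0})]=[0,0]$ is the identity of $G$ and lies in $L$. For closure under addition, given $[m_{1},\dg(s_{1})]$ and $[m_{2},\dg(s_{2})]$ in $L$, the product $s_{1}s_{2}$ lies in $S$ and is nonzero, so it is homogeneous of degree $\dg(s_{1})+\dg(s_{2})$, giving
\[
[m_{1},\dg(s_{1})]+[m_{2},\dg(s_{2})]=[m_{1}+m_{2},\dg(s_{1}s_{2})]\in L.
\]
For the containment of the image of $M$, the identity $[m,0]=[m+\dg(s),\dg(s)]$ (valid for any choice of $s\in S$ by the definition of the equivalence relation) exhibits every element of the canonical image as an element of $L$.

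Second, I would invoke Theorem \ref{Theorem 20 altin} to obtain the $G$-grading $S^{-1}R=\bigoplus_{x\in G}(S^{-1}R)_{x}$, and then observe that the very definition of $(S^{-1}R)_{x}$ forces this component to vanish whenever $x\notin L$: any nonzero $f\in(S^{-1}R)_{x}$ admits a presentation $f=r/s$ with $r$ homogeneous, $s\in S$ homogeneous, and $x=[\dg(r),\dg(s)]$, placing $x$ in $L$ by definition of $L$. Consequently,
\[
S^{-1}R=\bigoplus_{x\in L}(S^{-1}R)_{x},
\]
and the multiplicative compatibility $(S^{-1}R)_{x}\cdot(S^{-1}R)_{y}\subseteq(S^{-1}R)_{x+y}$ is inherited from Theorem \ref{Theorem 20 altin}, with $x+y\in L$ guaranteed by the first step.

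There is no genuine obstacle: the corollary is a bookkeeping refinement of Theorem \ref{Theorem 20 altin}, identifying the smallest sub-index-monoid of $G$ through which the grading actually factors. The only point requiring any care is the nonemptiness of $S$, which is needed to produce both the identity of $L$ and the witness $[m+\dg(s),\dg(s)]$ used to place the image of $M$ inside $L$.
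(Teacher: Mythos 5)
Your proposal is correct and follows essentially the same route as the paper: verify that $L$ is a submonoid containing the image of $M$ (the paper uses $s=1\in S$ where you use an arbitrary $s_{0}\in S$, an immaterial difference since $[\dg(s),\dg(s)]=[0,0]$ and $[m,0]=[m+\dg(s),\dg(s)]$ for any homogeneous $s\in S$), then invoke Theorem \ref{Theorem 20 altin} and observe that $(S^{-1}R)_{x}=0$ for $x\notin L$ by the very definition of the homogeneous components.
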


\begin{proof} In fact, $L$ is the set of all elements $x\in G$ which has a presentation of the form $x=[m,\dg(s)]$ with
$m\in M$ and $s \in S$. If $y=[m',\dg(s')]$ is a second element of $L$ then $x+y=[m+m',\dg(s)+\dg(s')]=[m+m',\dg(ss')]\in L$. If 0 denotes the identity element of $M$ then $[0,0]=[\dg(1),\dg(1)]\in L$. Hence $L$ is a submonoid of the group $G$. If $m\in M$ then $[m,0]=[m+\dg(1),\dg(1)]\in L$.
By Theorem \ref{Theorem 20 altin}, $S^{-1}R=\bigoplus\limits_{x\in G}(S^{-1}R)_{x}$ is a $G$-graded ring. But it can be easily seen that $(S^{-1}R)_{x}=0$ for all $x\in G\setminus L$. Hence, $S^{-1}R=\bigoplus\limits_{x\in L}(S^{-1}R)_{x}$ is an $L$-graded ring.  
\end{proof}

\begin{remark} In relation with Corollary \ref{Coro 10 on}, note that the identity element 0 of $M$ is not necessarily a member of $\{\dg(s): s \in S\}$. Although $1\in S$ and so 1 is homogeneous, it may happen that $\dg(1)\neq0$ (see Remark \ref{Remark iii uch}). 
\end{remark}

\begin{proposition} Let $S$ be a multiplicative set of homogeneous elements of an $M$-graded ring $R=\bigoplus\limits_{m\in M}R_{m}$ with $M$ a cancellative monoid. Then  $M'=\{\dg(s): s\in S\}$ is a submonoid of $M$.
\end{proposition}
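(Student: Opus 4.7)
The plan is to check directly the two requirements for $M'$ to be a submonoid of $M$: that it contains the identity element $0$ of $M$, and that it is closed under the monoid operation.

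For the identity, I would invoke Remark \ref{Remark iii uch}: since $M$ is cancellative, the unit $1$ of $R$ lies in $R_0$, so $\deg(1)=0$. Because $1\in S$ (a multiplicative set contains the unit) and $1$ is homogeneous with $\deg(1)=0$, we obtain $0=\deg(1)\in M'$. This is the only place where the cancellativity hypothesis is used.

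For closure, let $s,t\in S$. Since $S$ is multiplicative, $st\in S$; and since by hypothesis every element of $S$ is homogeneous, $st$ is in particular nonzero. From the general rule for graded rings recalled at the beginning of Section~2, the product of two nonzero homogeneous elements satisfies $\deg(st)=\deg(s)+\deg(t)$. Hence $\deg(s)+\deg(t)\in M'$, which gives the desired closure.

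There is really no serious obstacle here; the only mildly subtle point is to notice that the standing hypothesis ``$S$ is a multiplicative set of homogeneous elements'' implicitly forces $st\neq 0$ for all $s,t\in S$, since otherwise the element $st\in S$ would fail to be homogeneous (homogeneous elements being nonzero by the convention fixed in Section~2). Once this is acknowledged, the entire proof is a two-line verification built on Remark \ref{Remark iii uch} and the degree-additivity of products of homogeneous elements.
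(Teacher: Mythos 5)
Your proof is correct and follows essentially the same route as the paper's: cancellativity gives $1\in R_{0}$ (the paper cites an external lemma where you cite Remark \ref{Remark iii uch}, but the content is the same), hence $0=\dg(1)\in M'$, and closure follows from $st\in S$ being homogeneous with $\dg(st)=\dg(s)+\dg(t)$. Your extra remark that $st\neq 0$ is forced by $st\in S$ consisting of homogeneous (hence nonzero) elements is a point the paper leaves implicit.
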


\begin{proof} Since $M$ is cancellative, then by \cite[Lemma 3.8]{Abolfazl Tarizadeh}, $1\in R_{0}$  where $0$ is the identity element of $M$. Thus $\dg(1)=0\in M'$. If $s,t\in S$ then $st \in S$ and we have $\dg(s)+\dg(t)=\dg(st)\in M'$. Hence, $M'$ is a submonoid of $M$. 
\end{proof}

For any ring $R$ and a monoid $M$, the monoid-ring $R[M]=\bigoplus\limits_{m\in M}S_{m}$ is an $M$-graded ring with the homogeneous components $S_{m}=R\epsilon_{m}=\{r\epsilon_{m}: r \in R\}$ where we call $\epsilon_{m}:=(\delta_{a,m})_{a\in M}$ the monomial of $R[M]$ of degree $m$ and $\delta_{a,m}$ is the Kronecker delta. 

\begin{theorem}\label{Theorem 6 alti} If $S$ is a multiplicative subset of a ring $R$ and $M$ a monoid, then we have the canonical isomorphism of $G$-graded rings $T^{-1}(R[M])\simeq (S^{-1}R)[G]$ with $T=\{s\epsilon_{m}: s\in S, m\in M\}$ and $G$ is the Grothendieck group of $M$.  
\end{theorem}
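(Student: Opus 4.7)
The plan is to construct mutually inverse ring homomorphisms $\Phi: T^{-1}(R[M]) \to (S^{-1}R)[G]$ and $\Psi: (S^{-1}R)[G] \to T^{-1}(R[M])$ using universal properties, and then check that they preserve the $G$-gradings (the right-hand side carrying its natural monoid-ring grading, and the left-hand side the grading supplied by Theorem \ref{Theorem 20 altin}).

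For $\Phi$, I would first define a ring homomorphism $\varphi: R[M] \to (S^{-1}R)[G]$ by $r\epsilon_m \mapsto (r/1)\epsilon_{[m,0]}$; this is assembled from the canonical map $R \to S^{-1}R$ and the monoid morphism $M \to G$, $m \mapsto [m,0]$. Every $s\epsilon_m \in T$ is sent to the unit $(s/1)\epsilon_{[m,0]}$ of $(S^{-1}R)[G]$, whose inverse is $(1/s)\epsilon_{[0,m]}$. So by the universal property of localization, $\varphi$ factors uniquely as $\Phi$ with the explicit formula $\Phi\bigl((r\epsilon_a)/(s\epsilon_b)\bigr)=(r/s)\epsilon_{[a,b]}$.

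For $\Psi$, I would exploit the universal property of the monoid-ring: a ring map out of $(S^{-1}R)[G]$ is the same as a ring map $S^{-1}R\to T^{-1}(R[M])$ together with a compatible monoid map $G\to T^{-1}(R[M])$. The composition $R\to R[M]\to T^{-1}(R[M])$ sends each $s\in S$ to the unit $s\epsilon_{0}\in T$ and hence factors uniquely through a ring map $S^{-1}R\to T^{-1}(R[M])$, $r/s\mapsto (r\epsilon_0)/(s\epsilon_0)$. The monoid map $M\to T^{-1}(R[M])^{\times}$, $m\mapsto \epsilon_m/1$ (landing in units because $\epsilon_m\in T$), extends by the universal property of the Grothendieck group to a group homomorphism $G\to T^{-1}(R[M])^{\times}$ given on classes by $[a,b]\mapsto \epsilon_a/\epsilon_b$. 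Combining these yields $\Psi\bigl((r/s)\epsilon_{[a,b]}\bigr)=(r\epsilon_a)/(s\epsilon_b)$. Then $\Phi$ and $\Psi$ are mutually inverse: the composites fix the respective generators $r\epsilon_m$ and $(r/s)\epsilon_{[a,b]}$, which forces the identity globally by universal uniqueness. The gradings match directly: a degree-$[a,b]$ element of $T^{-1}(R[M])$ admits, by Theorem \ref{Theorem 20 altin}, a representative $(r\epsilon_a)/(s\epsilon_b)$ whose image under $\Phi$ is $(r/s)\epsilon_{[a,b]}\in((S^{-1}R)[G])_{[a,b]}$.

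The step I expect to be trickiest is verifying that $\Psi$ is well-defined on elements rather than on presentations: one must show that $[a,b]=[a',b']$ in $G$ forces $(r\epsilon_a)/(s\epsilon_b)=(r\epsilon_{a'})/(s\epsilon_{b'})$ in $T^{-1}(R[M])$. By definition of the Grothendieck group, $[a,b]=[a',b']$ provides some $m\in M$ with $a+b'+m=b+a'+m$, and the required witness in $T$ is precisely $\epsilon_m=1\cdot\epsilon_m$, which kills the cross-product difference $rs(\epsilon_{a+b'}-\epsilon_{a'+b})$. The universal-property route sidesteps this explicit check, but it is essentially the same calculation in disguise.
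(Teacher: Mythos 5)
Your proposal is correct, and the construction of the forward map is exactly the paper's: both define $f:R[M]\to(S^{-1}R)[G]$ by $r\epsilon_{m}\mapsto(r/1)\epsilon_{[m,0]}$, observe that each $s\epsilon_{m}\in T$ becomes a unit, and factor through the localization to obtain $h=\Phi$ with $h(r\epsilon_{a}/s\epsilon_{b})=(r/s)\epsilon_{[a,b]}$. Where you genuinely diverge is in proving bijectivity. The paper notes surjectivity by inspection and proves injectivity by a grading argument: since $h$ is a morphism of $G$-graded rings (the source being graded by Theorem \ref{Theorem 20 altin}), $\Ker h$ is a graded ideal, so it suffices to check that a homogeneous fraction $r\epsilon_{m}/s\epsilon_{n}$ with $r/s=0$ is already zero, which follows from $rs'=0$ for some $s'\in S$. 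You instead build an explicit two-sided inverse $\Psi$ by stacking universal properties (localization to get $S^{-1}R\to T^{-1}(R[M])$, the Grothendieck group to get $G\to T^{-1}(R[M])^{\ast}$ via $[a,b]\mapsto\epsilon_{a}/\epsilon_{b}$, and the monoid ring to assemble them), then conclude by uniqueness on generators. Both routes are valid. Yours has the advantage that bijectivity is independent of the $G$-grading of $T^{-1}(R[M])$ — Theorem \ref{Theorem 20 altin} is needed only to state the graded-isomorphism claim and to check $\Phi$ respects degrees — and, as you note, the well-definedness worry about the presentation $[a,b]$ is absorbed into the universal property of $G$. The paper's route is shorter on the page and exploits the grading it has just established, at the cost of making the injectivity check lean on that grading.
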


\begin{proof} By the universal property of monoid-rings, there exists a (unique) morphism of rings $f:R[M]\rightarrow(S^{-1}R)[G]$ which sends each $r\epsilon_{m}$ into $(r/1)\epsilon_{[m,0]}$. But $T$ is a multiplicative set of homogeneous elements of an $M$-graded ring $R[M]$. Thus by Theorem \ref{Theorem 20 altin}, $T^{-1}(R[M])$ is a $G$-graded ring.
The image of each $s\epsilon_{m}\in T$ under $f$ is invertible, because $\big((s/1)\epsilon_{[m,0]}\big)
\big((1/s)\epsilon_{[0,m]}\big)=\epsilon_{[m,m]}=1$. Then by the universal property of localization rings, there exists a (unique) morphism of rings $h:T^{-1}(R[M])\rightarrow(S^{-1}R)[G]$ which sends each $r\epsilon_{m}/s\epsilon_{n}$ into $(r/s)\epsilon_{[m,n]}$. The map $h$ is surjective. It is also a morphism of $G$-graded rings and so its kernel is a graded ideal. Hence, to show that $\Ker h=0$, it suffices to check it for homogeneous elements. If $r\epsilon_{m}/s\epsilon_{n}\in\Ker h$ then $r/s=0$ and so $rs'=0$ for some $s'\in S$. This yields that $r\epsilon_{m}/s\epsilon_{n}=
rs'\epsilon_{m}/ss'\epsilon_{n}=0$. Hence, $h$ is injective.  
\end{proof}

\begin{corollary}\label{Theorem I} For any ring $R$ and a monoid $M$, we have the canonical isomorphism of  
$G$-graded rings $T^{-1}(R[M])\simeq R[G]$ with $T=\{\epsilon_{m}: m\in M\}$ and $G$ is the Grothendieck group of $M$.  
\end{corollary}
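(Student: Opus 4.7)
The plan is to derive this as an immediate special case of Theorem \ref{Theorem 6 alti} by taking the trivial multiplicative set $S=\{1\}\subseteq R$. With this choice, the canonical map $R\to S^{-1}R$ is an isomorphism of rings, so $(S^{-1}R)[G]\simeq R[G]$ canonically as $G$-graded rings. Moreover, the multiplicative set appearing in Theorem \ref{Theorem 6 alti} becomes $T=\{1\cdot\epsilon_{m}: m\in M\}=\{\epsilon_{m}:m\in M\}$, which is exactly the $T$ of the statement.

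Thus the isomorphism $T^{-1}(R[M])\simeq (S^{-1}R)[G]$ furnished by Theorem \ref{Theorem 6 alti} specializes to the canonical isomorphism $T^{-1}(R[M])\simeq R[G]$ of $G$-graded rings, which is what we wanted. There is essentially no obstacle here; the only thing to note is that the isomorphism is indeed \emph{canonical}, since it is constructed in Theorem \ref{Theorem 6 alti} via the universal properties of the monoid-ring and of the localization, both of which produce canonical maps, and specialization to $S=\{1\}$ preserves this canonicity.
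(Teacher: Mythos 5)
Your proposal is correct and coincides with the paper's own proof, which likewise derives the corollary from Theorem \ref{Theorem 6 alti} by setting $S=\{1\}$; you merely spell out the two routine identifications ($S^{-1}R\simeq R$ and $T=\{\epsilon_{m}:m\in M\}$) that the paper leaves implicit.
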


\begin{proof} It follows from Theorem \ref{Theorem 6 alti} by taking $S=\{1\}$.
\end{proof}

\begin{remark} If $\{M_{k}: k\in I\}$ is a family of monoids, then the Grothendieck group of the direct sum monoid $\bigoplus\limits_{k\in I}M_{k}$ is canonically isomorphic to the direct sum group $\bigoplus\limits_{k\in I}G_{k}$ where each $G_{k}$ is the Grothendieck group of $M_{k}$.
\end{remark}

\begin{corollary}\label{Corollary I} For any ring $R$ and an index set $I$, the ring of Laurent polynomials $R[x^{\pm1}_{k}: k\in I]$ is canonically isomorphic to the group-ring $R[\bigoplus\limits_{k\in I}\mathbb{Z}]$. 
\end{corollary}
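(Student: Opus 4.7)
The plan is to derive this corollary by applying Corollary \ref{Theorem I} to the specific monoid $M=\bigoplus_{k\in I}\mathbb{N}$. There is essentially no new content; the work lies in the identifications.

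First, I would identify $R[\bigoplus_{k\in I}\mathbb{N}]$ with the polynomial ring $R[x_{k}:k\in I]$ via the canonical map sending $\epsilon_{e_{k}}\mapsto x_{k}$, where $e_{k}$ denotes the standard basis element of $\bigoplus_{k\in I}\mathbb{N}$. Under this identification each monomial $\epsilon_{m}$ with $m=(n_{k})_{k\in I}$ corresponds to $\prod_{k}x_{k}^{n_{k}}$, so the multiplicative set $T=\{\epsilon_{m}:m\in M\}$ of Corollary \ref{Theorem I} becomes exactly the set of monomials in the $x_{k}$'s.

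Second, I would invoke the remark preceding the corollary: since the Grothendieck group commutes with direct sums of monoids, the Grothendieck group of $\bigoplus_{k\in I}\mathbb{N}$ is canonically $\bigoplus_{k\in I}\mathbb{Z}$. The localization $T^{-1}(R[M])$ is the localization of $R[x_{k}:k\in I]$ at the multiplicative set of all monomials, and by definition this is the Laurent polynomial ring $R[x_{k}^{\pm 1}:k\in I]$. Plugging these two identifications into the isomorphism $T^{-1}(R[M])\simeq R[G]$ of Corollary \ref{Theorem I} yields the claim.

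The only potentially tricky step is verifying that inverting all monomials in $R[x_{k}:k\in I]$ produces the Laurent polynomial ring, rather than something larger. This is immediate because every monomial is a product of the $x_{k}$, so inverting the $x_{k}$ individually suffices and conversely each monomial becomes invertible once the $x_{k}$ are; hence the two localizations coincide. With this observation the proof reduces to a one-line appeal: apply Corollary \ref{Theorem I} with $M=\bigoplus_{k\in I}\mathbb{N}$.
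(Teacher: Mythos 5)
Your proof is correct and follows exactly the paper's route: identify $R[x_k:k\in I]$ with $R[M]$ for $M=\bigoplus_{k\in I}\mathbb{N}$, use the preceding remark to identify the Grothendieck group with $\bigoplus_{k\in I}\mathbb{Z}$, note that localizing at the monomials gives the Laurent polynomial ring, and apply Corollary \ref{Theorem I}. The extra care you take in checking that inverting all monomials yields exactly the Laurent polynomial ring is a detail the paper leaves implicit, but it is the same argument.
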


\begin{proof} We know that $R[x_{k}: k\in I]=R[M]$ and by the above remark the Grothendieck group of the additive monoid $M=\bigoplus\limits_{k\in I}\mathbb{N}$ is canonically isomorphic to the additive group $\bigoplus\limits_{k\in I}\mathbb{Z}$. We also have $R[x^{\pm1}_{k}: k\in I]=T^{-1}(R[M])$ where $T=\{\epsilon_{m}: m\in M\}$. Hence, the assertion follows from Corollary \ref{Theorem I}. 
\end{proof}

\section{Cancellative monoids and torsion-free groups}

In the following result, we characterize cancellative monoids by adding two new equivalences (iii) and (iv).  

\begin{proposition}\label{Corollary II} For a  monoid $M$ with the Grothendieck group $G$ the following assertions are equivalent. \\
$\mathbf{(i)}$ $M$ has the cancellation property. \\
$\mathbf{(ii)}$ The canonical map $M\rightarrow G$ is injective. \\
$\mathbf{(iii)}$ For any nonzero ring $R$, the monomial $\epsilon_{m}$ is a non-zero-divisor of $R[M]$ for all $m\in M$. \\
$\mathbf{(iv)}$ For any ring $R$, the canonical ring map $R[M]\rightarrow R[G]$ given by $\sum\limits_{m\in M}r_{m}\epsilon_{m}\mapsto\sum\limits_{m\in M}r_{m}\epsilon_{[m,0]}$ is injective.
\end{proposition}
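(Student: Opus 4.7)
My plan is to prove the four conditions equivalent by the cycle (i) $\Rightarrow$ (ii) $\Rightarrow$ (iv) $\Rightarrow$ (iii) $\Rightarrow$ (i). The equivalence (i) $\Leftrightarrow$ (ii) is already recorded in the preliminaries as the standard fact that the canonical map $M\to G$ is injective precisely when $M$ has the cancellation property, so the only implications that need fresh work are (ii) $\Rightarrow$ (iv), (iv) $\Rightarrow$ (iii), and (iii) $\Rightarrow$ (i).

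For (ii) $\Rightarrow$ (iv), I would simply observe that once $m\mapsto[m,0]$ is injective, the monomials $\epsilon_{[m,0]}$, for $m$ ranging over $M$, are distinct elements of the standard $R$-basis of $R[G]$ and are in particular $R$-linearly independent. Hence a relation $\sum_{m\in M} r_{m}\epsilon_{[m,0]}=0$ in $R[G]$ forces every $r_{m}$ to vanish, which is exactly injectivity of the ring map in (iv).

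For (iv) $\Rightarrow$ (iii), the cleanest route is to use Corollary~\ref{Theorem I} to identify the map $R[M]\to R[G]$ of (iv) with the localization map $R[M]\to T^{-1}(R[M])$ for $T=\{\epsilon_{m}:m\in M\}$. The kernel of this localization consists of those $f\in R[M]$ killed by some $\epsilon_{m}$, so injectivity of the map is the same as saying no $\epsilon_{m}$ is a zero-divisor, which is (iii). Equivalently, inside $R[G]$ each $\epsilon_{[m,0]}$ is a unit with inverse $\epsilon_{[0,m]}$ and therefore a non-zero-divisor, so any relation $\epsilon_{m}\cdot f=0$ in $R[M]$ pulls forward to $\epsilon_{[m,0]}\cdot\phi(f)=0$ in $R[G]$, giving $\phi(f)=0$ and hence $f=0$ by the injectivity of $\phi$.

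For (iii) $\Rightarrow$ (i), which is the only direction with any real content, I would argue by contraposition. Suppose $M$ fails the cancellation property, so there exist $a,b,c\in M$ with $a+c=b+c$ and $a\neq b$, and pick any nonzero ring $R$ (for instance $R=\mathbb{Z}$). Since $a\neq b$, the monomials $\epsilon_{a}$ and $\epsilon_{b}$ are distinct $R$-basis elements of $R[M]$, so $\epsilon_{a}-\epsilon_{b}\neq0$; nevertheless $\epsilon_{c}(\epsilon_{a}-\epsilon_{b})=\epsilon_{a+c}-\epsilon_{b+c}=0$, exhibiting $\epsilon_{c}$ as a zero-divisor and contradicting (iii). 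The only real obstacle in the whole proposition is the translation between the ring-theoretic statement (iii) and the monoid-theoretic statement (i), and it is handled by this single explicit witness.
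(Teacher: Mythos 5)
Your proof is correct, but it is organized differently from the paper's. The paper uses (i) as a hub, proving (i)$\Leftrightarrow$(ii), (i)$\Leftrightarrow$(iii) and (i)$\Leftrightarrow$(iv) separately, with each forward implication resting on the observation that cancellativity makes $r_{x}\epsilon_{x+m}$ (resp.\ $r_{m}\epsilon_{[m,0]}$) the unique homogeneous term of its degree. You instead close a cycle (i)$\Rightarrow$(ii)$\Rightarrow$(iv)$\Rightarrow$(iii)$\Rightarrow$(i); your (ii)$\Rightarrow$(iv) (distinct basis monomials of $R[G]$ are $R$-linearly independent) and your (iii)$\Rightarrow$(i) (the witness $\epsilon_{c}(\epsilon_{a}-\epsilon_{b})=0$) coincide in substance with the paper's arguments. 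The genuinely new ingredient is your (iv)$\Rightarrow$(iii), a link the paper never draws directly: you identify the map of (iv) with the localization $R[M]\rightarrow T^{-1}(R[M])$ at $T=\{\epsilon_{m}:m\in M\}$ via Corollary~\ref{Theorem I}, so that its kernel is exactly the set of elements killed by some $\epsilon_{m}$ --- or, even more simply, you note that each $\epsilon_{[m,0]}$ is a unit of $R[G]$, hence a non-zero-divisor, and pull the relation $\epsilon_{m}f=0$ forward. This buys a structural explanation of why (iii) and (iv) are equivalent to each other independently of cancellativity (the map in (iv) is injective iff nothing in $T$ annihilates a nonzero element), whereas the paper's hub-and-spoke layout keeps each equivalence elementary and self-contained. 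One small point of care, which you do handle: (iii) is quantified over \emph{nonzero} rings, so in (iv)$\Rightarrow$(iii) you must start from a nonzero $R$, as you do.
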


\begin{proof} (i)$\Leftrightarrow$(ii): It is well known and easy. \\
(i)$\Rightarrow$(iii): If $(\epsilon_{m})f=0$
for some $f=\sum\limits_{x\in M}r_{x}\epsilon_{x}\in R[M]$. Then $\sum\limits_{x\in M}r_{x}\epsilon_{x+m}=0$. Since $M$ is cancellative, $r_{x}\epsilon_{x+m}$ is the ONLY homogeneous element of degree $x+m$ on the left hand side of the above equality. Hence, $r_{x}\epsilon_{x+m}=0$ and so $r_{x}=0$ for all $x\in M$. This shows that $f=0$. \\ 
(iii)$\Rightarrow$(i): Suppose $m+a=m+b$ for some $a,b,m\in M$. Then in $\mathbb{Z}[M]$ we have $\epsilon_{m}(\epsilon_{a}-\epsilon_{b})=0$. By hypothesis, $\epsilon_{m}$ is a non-zero-divisor, thus $\epsilon_{a}=\epsilon_{b}$ and so $a=b$. \\
(i)$\Rightarrow$(iv): Suppose $\sum\limits_{m\in M}r_{m}\epsilon_{[m,0]}=0$. Since $M$ is cancellative, $r_{m}\epsilon_{[m,0]}$ is the ONLY homogeneous element of degree $[m,0]$ on the left hand side of the above equality. 
This yields that $r_{m}\epsilon_{[m,0]}=0$ and so $r_{m}=0$ for all $m\in M$. This shows that $\sum\limits_{m\in M}r_{m}\epsilon_{m}=0$. \\ 
(iv)$\Rightarrow$(i): Suppose $m+a=m+b$ for some $a,b,m\in M$. This shows that $[a,0]=[b,0]$ and so $\epsilon_{a}-\epsilon_{b}$ is in the kernel of the canonical ring map $\mathbb{Z}[M]\rightarrow\mathbb{Z}[G]$. Thus by hypothesis, $\epsilon_{a}=\epsilon_{b}$ and so $a=b$.  
\end{proof}

Recall that by a \emph{totally} (\emph{linearly}) \emph{ordered monoid} we mean a monoid $M$ equipped with a total ordering $<$ such that its operation is compatible with its ordering, i.e. if $a<b$ for some $a,b\in M$, then $a+c\leqslant b+c$ for all $c\in M$. If moreover $M$ is cancellative, then $a<b$ yields that $a+c<b+c$.  

\begin{remark}\label{Remark I} Let $\{M_{i}: i\in I\}$ be a family of totally ordered monoids and let $M=\prod\limits_{i\in I}M_{i}$ be their direct product monoid. Then $M$ can be made into a totally ordered monoid via the lexicographical ordering induced by the orderings on the $M_{i}$. In fact, using the well-ordering theorem (which asserts that every set can be well-ordered), the index set $I$ can be well-ordered. Take $a=(a_{i}), b=(b_{i})\in M$. If $a\neq b$, then the set $\{i\in I: a_{i}\neq b_{i}\}$ is nonempty. Let
$k$ be the least element of this set. Then the lexicographical ordering $<_{\mathrm{lex}}$ is defined on $M$ as $a<_{\mathrm{lex}}b$ or $b<_{\mathrm{lex}}a$, depending on whether $a_{k}< b_{k}$ or $b_{k}< a_{k}$, where $<$ is the ordering on $M_{k}$. Hence, $(M,<_{\mathrm{lex}})$ is a totally ordered monoid. In particular, the direct sum monoid $\bigoplus\limits_{i\in I}M_{i}$ is also a totally ordered monoid, because every submonoid of a totally ordered monoid is itself a totally ordered monoid.
\end{remark}

In the following theorem, we first reformulate the classical result \cite[\S 2.12, Theorem 22]{Northcott} in modern form and then give a new and simple proof for it.   

\begin{theorem}\label{Theorem III} A cancellative monoid is a totally ordered monoid if and only if its Grothendieck group is a torsion-free group. 
\end{theorem}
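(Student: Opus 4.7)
The plan is to prove each direction separately. The forward implication is a direct algebraic computation; the backward implication is the substantive one, and I would handle it by embedding $M$ into a chain of totally ordered groups, with the lexicographical construction of Remark~\ref{Remark I} doing the main work.

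For the forward direction, assume $M$ is cancellative and totally ordered, and suppose $g=[a,b]\in G$ satisfies $ng=0$ for some positive integer $n$. Then $[na,nb]=[0,0]$, so by the defining equivalence relation there exists $m\in M$ with $na+m=nb+m$, and cancellation in $M$ gives $na=nb$. If $a\neq b$, then by totality I may assume $a<b$; since $M$ is cancellative, the sharpened compatibility $a<b\Rightarrow a+c<b+c$ holds, and a routine induction on $n$ yields $na<nb$, contradicting $na=nb$. Hence $a=b$ and $g=0$, so $G$ is torsion-free.

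For the backward direction, assume $M$ is cancellative with torsion-free Grothendieck group $G$. By Proposition~\ref{Corollary II} the canonical map $M\hookrightarrow G$ is injective, so it suffices to produce a total ordering on $G$ compatible with the group operation and restrict it to $M$. Since $G$ is torsion-free, the canonical map $G\to V:=G\otimes_{\mathbb{Z}}\mathbb{Q}$ is injective (its kernel is exactly the torsion subgroup of $G$). Choosing a $\mathbb{Q}$-basis of $V$ gives an isomorphism of abelian groups $V\cong\bigoplus_{i\in I}\mathbb{Q}$, and each copy of $\mathbb{Q}$ is a totally ordered monoid under its usual ordering. By Remark~\ref{Remark I}, the direct sum $\bigoplus_{i\in I}\mathbb{Q}$ is then a totally ordered monoid (in fact a totally ordered group) under the lexicographical ordering.

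Pulling back this total order along $M\hookrightarrow G\hookrightarrow V\cong\bigoplus_{i\in I}\mathbb{Q}$ equips $M$ with a total ordering, and compatibility with the monoid operation is automatic because each map in the chain is a monoid homomorphism. The only real obstacle is that this direction rests on the axiom of choice (for the existence of a $\mathbb{Q}$-basis and for the well-ordering used in Remark~\ref{Remark I}), but this is unavoidable: taking $M=G$ in this implication recovers Levi's theorem, which the paper explicitly wishes to deduce as a corollary.
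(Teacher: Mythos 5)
Your proposal is correct and follows essentially the same route as the paper: the forward direction is the paper's second argument (a torsion element $[a,b]$ with $a<b$ forces $na<nb$ versus $na=nb$), and the backward direction embeds the torsion-free $G$ into a $\mathbb{Q}$-vector space (your $G\otimes_{\mathbb{Z}}\mathbb{Q}$ is canonically the paper's $S^{-1}G$ with $S=\mathbb{Z}\setminus\{0\}$) and pulls back the lexicographical order from $\bigoplus_{i\in I}\mathbb{Q}$ via Remark~\ref{Remark I}. No substantive difference.
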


\begin{proof} Let $M$ be a cancellative monoid with the Grothendieck group $G$. If $M$ is a totally ordered monoid (by an ordering $<$), then it can be easily seen that $G$ is a totally ordered group whose order is defined by $[a,b]<[c,d]$ if $a+d<b+c$ in $M$. It is also easy to check that every totally ordered group is a torsion-free group. As a second proof for the implication ``$\Rightarrow$", suppose some non-identity element $x$ of $G$ is of finite order. So there exists a natural number $n\geqslant2$ such that $nx=0$. We may write $x=[a,b]$ where $a,b\in M$ with $a\neq b$. Then we may assume, say $a<b$. It follows that $na<nb$. But $nx=0$ yields that $[na,nb]=[0,0]$ and so $na=nb$ which is a contradiction. Hence, $G$ is a torsion-free group (i.e. every non-identity element is of infinite order). We prove the reverse implication as follows. Consider $G$ as a $\mathbb{Z}$-module and put $S:=\mathbb{Z}\setminus\{0\}$. Since $G$ is a torsion-free group, the canonical map $G\rightarrow S^{-1}G$ is injective. Note that $S^{-1}G$ is an $S^{-1}\mathbb{Z}$-module. But $\mathbb{Q}:=S^{-1}\mathbb{Z}$ is a field (the field of rational numbers). Hence, the $\mathbb{Q}$-vector space $S^{-1}G$ is isomorphic to a direct sum of copies of $\mathbb{Q}$. The additive group of rational numbers $\mathbb{Q}$ is a totally ordered group whose order is defined by $r/s <r'/s'$ if $rs'<r's$ in $\mathbb{Z}$.
Then by Remark \ref{Remark I}, any direct sum of copies of $\mathbb{Q}$ is a totally ordered group with the lexicographical ordering. Therefore $S^{-1}G$ and hence also $G$ are totally ordered groups. But the canonical map $M\rightarrow G$ is injective. Therefore $M$ is also a totally ordered monoid.
\end{proof}

As an application, Levi's theorem (see \cite[\S3]{Levi} or \cite[Theorem 6.31]{Lam}) is easily deduced:

\begin{corollary}\label{Corollary  IV} Every torsion-free abelian group is a totally ordered group.
\end{corollary}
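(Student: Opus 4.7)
The plan is to deduce this immediately from Theorem \ref{Theorem III} by observing that an abelian group, viewed as a monoid, is its own Grothendieck group. Concretely, let $G$ be a torsion-free abelian group. Since $G$ is a group, it is in particular a cancellative commutative monoid, so Theorem \ref{Theorem III} will apply once I identify its Grothendieck group.

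First I would check that the Grothendieck group of $G$ (viewed as a monoid) is canonically isomorphic to $G$ itself. This is a direct consequence of the universal property: the identity map $\Identity_{G}:G\to G$ is a monoid morphism into an abelian group, so the canonical morphism $G\to K$ from $G$ to its Grothendieck group $K$ must factor uniquely through $\Identity_{G}$, and conversely. Equivalently, one can verify directly that under the relation $(a,b)\sim(c,d)$ in $G\times G$ used to build the Grothendieck group, the assignment $[a,b]\mapsto a-b$ is a well-defined group isomorphism $K\to G$ inverse to $g\mapsto[g,0]$.

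Next, since $G$ is torsion-free by hypothesis, Theorem \ref{Theorem III} applied to the cancellative monoid $G$ yields that $G$ is a totally ordered monoid. But a totally ordered group is nothing more than a totally ordered cancellative abelian monoid that also happens to be a group, so this is exactly the statement that $G$ is a totally ordered group.

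There is no real obstacle here — the entire content sits in Theorem \ref{Theorem III}, and the only thing to observe is the (essentially tautological) fact that the Grothendieck group construction is the identity on the subcategory of abelian groups. This is why the corollary follows with almost no additional work.
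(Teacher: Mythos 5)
Your proposal is correct and follows exactly the same route as the paper: identify the Grothendieck group of an abelian group with the group itself (via the universal property or the explicit map $[a,b]\mapsto a-b$) and then invoke Theorem \ref{Theorem III}. The extra detail you supply in verifying that identification is fine but not needed beyond what the paper records.
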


\begin{proof} The Grothendieck group of every Abelian group is canonically isomorphic to itself. Hence, the assertion immediately follows from Theorem \ref{Theorem III}. 
\end{proof}

\section{Group of units as the Grothendieck group}

The group of units (invertible elements) of a ring $R$ is denoted by $R^{\ast}$. If $S$ is a multiplicative set of a ring $R$, then $S$ is a submonoid of the multiplicative monoid of $R$. We have then the following result:

\begin{lemma}\label{Lemma 2 two-iki} The Grothendieck group of a multiplicative set $S$ of a ring $R$ can be canonically embedded in $(S^{-1}R)^{\ast}$. 
\end{lemma}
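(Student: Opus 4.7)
The plan is to write down the obvious candidate map and observe that the equivalence relation defining the Grothendieck group of $S$ coincides, term for term, with the equivalence relation defining fractions in $S^{-1}R$. Concretely, since $S$ is a commutative multiplicative submonoid of $R$, I form the Grothendieck group $G$ of $S$ (whose operation I keep writing additively, as in the introduction) and define
\[
\phi: G \longrightarrow (S^{-1}R)^{\ast}, \qquad \phi([s,t]) = s/t.
\]
Note that each $s/t$ lies in $(S^{-1}R)^{\ast}$ because $t/s$ is its inverse, so the target makes sense.

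First I would check that $\phi$ is well defined. Suppose $[s,t]=[s',t']$ in $G$. By the construction of the Grothendieck group recalled in the introduction, this means there exists $u\in S$ with $st'u = s'tu$ in $S$ (hence in $R$). But this is precisely the condition for $s/t = s'/t'$ in $S^{-1}R$, so $\phi([s,t])=\phi([s',t'])$. Next, $\phi$ is a group homomorphism: the additive operation on $G$ is $[s,t]+[s',t']=[ss',tt']$, which maps to $ss'/tt' = (s/t)(s'/t') = \phi([s,t])\phi([s',t'])$, and the identity $[1,1]\in G$ is sent to $1/1=1$.

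Finally I would verify injectivity, which is the only place something could fail. Assume $\phi([s,t])=1$, i.e.\ $s/t = 1/1$ in $S^{-1}R$. By the definition of equality in the localization, there exists $u\in S$ with $s\cdot 1\cdot u = t\cdot 1\cdot u$, that is $su=tu$. This is exactly the relation which forces $(s,t)\sim(1,1)$ in $S\times S$, so $[s,t]=[1,1]$ in $G$. Hence $\Ker\phi$ is trivial and $\phi$ embeds $G$ into $(S^{-1}R)^{\ast}$.

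I do not expect any serious obstacle; the point of the lemma is that the equivalence relation used to build the Grothendieck group of the multiplicative monoid $S$ and the equivalence relation used to build $S^{-1}R$ are literally the same condition, so well-definedness and injectivity of the candidate map both reduce to matching definitions. The only thing worth emphasizing in the writeup is that the map $\phi$ is canonical in the sense that it is induced, via the universal property of the Grothendieck group, by the monoid morphism $S\to (S^{-1}R)^{\ast}$ sending $s\mapsto s/1$, whose existence uses only that each $s/1$ is a unit in $S^{-1}R$.
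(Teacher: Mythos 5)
Your proposal is correct and follows essentially the same route as the paper: the paper obtains the homomorphism $[s,t]\mapsto s/t$ via the universal property applied to $s\mapsto s/1$ (which you also note), and proves injectivity by exactly the same observation that $s/t=1$ in $S^{-1}R$ yields $su=tu$ for some $u\in S$, hence $[s,t]=[1,1]$. Your extra direct verification of well-definedness is harmless but subsumed by the universal-property argument.
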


\begin{proof} Let $G=\{[s,t]: s,t\in S\}$ be the Grothendieck group of the multiplicative monoid $S$. The map $S\rightarrow(S^{-1}R)^{\ast}$ given by $s\mapsto s/1$ is a morphism of monoids. Then by the universal property of Grothendieck groups, there exists a morphism of groups $f:G\rightarrow(S^{-1}R)^{\ast}$ which sends each $[s,t]$ into $s/t$. If $[s,t]\in\Ker f$, then $ss'=ts'$ for some $s'\in S$. Thus $[s,t]=[1,1]$ is the identity element of $G$. Hence, $f$ is injective. 
\end{proof}

Let $S$ be a multiplicative set of a ring $R$. Then $\overline{S}=\{a\in R:\exists b\in R, ab\in S\}$ is a multiplicative set of $R$ and $S\subseteq\overline{S}$. It 
can be also seen that $\overline{S}=\{a\in R: a/1\in(S^{-1}R)^{\ast}\}=R\setminus\bigcup\limits_
{\substack{\mathfrak{p}\in\Spec(R),\\\mathfrak{p}\cap S=
\emptyset}}\mathfrak{p}$.
The set $\overline{S}$ is called the saturation of $S$, and $S$ is called saturated if $S=\overline{S}$. 
For example, if $\mathfrak{p}$ is a prime ideal of a ring $R$, then $R\setminus\mathfrak{p}$ is saturated. The set of non-zero-divisors of every ring is saturated. The multiplicative set $S=\{4^{n}:n\geqslant0\}=\{1,4,16,\ldots\}$ of $\mathbb{Z}$ is not saturated, because $2\in\overline{S}$ but not in $S$.

\begin{lemma} If $S$ is a saturated multiplicative set of a ring $R$, then the invertible elements of $S^{-1}R$ are precisely of the form $s/t$ with $s,t\in S$.
\end{lemma}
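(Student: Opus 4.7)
My plan is to exploit the characterization of the saturation recalled just before the lemma, namely that $\overline{S}=\{a\in R: a/1\in(S^{-1}R)^{\ast}\}$, combined with the hypothesis $S=\overline{S}$.

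First I would dispatch the easy direction. If $s,t\in S$, then $t/s$ is an element of $S^{-1}R$ and $(s/t)(t/s)=1$, so $s/t\in(S^{-1}R)^{\ast}$. This does not use saturation.

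For the harder direction, let $u\in(S^{-1}R)^{\ast}$ be arbitrary. Writing $u=a/s$ for some $a\in R$ and $s\in S$, I would observe that $s/1\in(S^{-1}R)^{\ast}$ (its inverse is $1/s$), and therefore $a/1=(a/s)(s/1)$ is a product of two units, hence itself a unit in $S^{-1}R$. By the displayed characterization of $\overline{S}$, this forces $a\in\overline{S}$, and then by the saturation hypothesis $a\in S$. Consequently $u=a/s$ is already of the required form with $a,s\in S$.

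There is essentially no obstacle to overcome: the only nontrivial content is the characterization of $\overline{S}$, which is stated (and used without proof) in the surrounding text. The argument is really just the observation that multiplying $a/s$ by the unit $s/1$ moves the question from ``is $a/s$ a unit?'' to ``is $a/1$ a unit?'', and saturation is precisely what is needed to translate the latter back into membership of $a$ in $S$.
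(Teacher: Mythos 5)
Your proof is correct and complete. The paper itself dismisses this lemma with ``It is straightforward,'' so there is no argument to diverge from; your write-up supplies exactly the expected details, resting (as the surrounding text does) on the characterization $\overline{S}=\{a\in R: a/1\in(S^{-1}R)^{\ast}\}$ together with the hypothesis $S=\overline{S}$.
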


\begin{proof} It is straightforward. 
\end{proof}

\begin{example} The converse of the above lemma does not hold. In fact, we find a non-saturated multiplicative set $S$ in a ring $R$ such that the invertible elements of the nonzero ring $S^{-1}R$ are precisely of the form $s/t$ with $s,t\in S$. The multiplicative set $S=\{ax^{n}: a\in K^{\ast}, n=0 \: or \: n\geqslant2\}$ of the polynomial ring $R=K[x]$ (with $K$ a field) is non-saturated, because $x\in\overline{S}\setminus S$. We know that $K[x]=\bigoplus\limits_{n\geqslant0}Kx^{n}$ is an $\mathbb{N}$-graded ring and so by Corollary \ref{Coro 11 onbir}, $S^{-1}R$ is a $\mathbb{Z}$-graded ring. We know that in a $\mathbb{Z}$-graded integral domain, every invertible element is homogeneous. Thus if $f$ is an invertible element in $S^{-1}R$ then $f=ax^{m}/bx^{n}$ where $bx^{n}\in S$, $a\neq0$ and $m\geqslant0$. We can take $ax^{m}\in S$, because $K$ is a field and so $a\in K^{\ast}$, and if $m=1$ then we may write $f=ax^{m+2}/bx^{n+2}$.
\end{example}

\begin{corollary}\label{Coro 7 yedi} The Grothendieck group of a saturated multiplicative set $S$ of a ring $R$ is canonically isomorphic to $(S^{-1}R)^{\ast}$.
\end{corollary}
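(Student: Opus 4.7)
The plan is to show that the injective morphism $f:G\to (S^{-1}R)^{\ast}$ constructed in Lemma \ref{Lemma 2 two-iki} is in fact surjective when $S$ is saturated, hence an isomorphism. Nothing new needs to be invented: the two key ingredients, namely the canonical embedding via $[s,t]\mapsto s/t$ and the description of invertible elements in the saturated case, are already available above.

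Concretely, I would first recall that by Lemma \ref{Lemma 2 two-iki} the map $f:G\to(S^{-1}R)^{\ast}$ sending $[s,t]$ to $s/t$ is a well-defined injective group homomorphism, without any saturation hypothesis. Next, I would invoke the preceding lemma, which states that when $S$ is saturated every unit of $S^{-1}R$ has the shape $s/t$ with $s,t\in S$. Given any $u\in(S^{-1}R)^{\ast}$, writing $u=s/t$ with $s,t\in S$ exhibits $u=f([s,t])$, which proves surjectivity.

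The remaining sentence would just combine these two facts: $f$ is an injective group homomorphism with image equal to $(S^{-1}R)^{\ast}$, hence a canonical isomorphism $G\simeq(S^{-1}R)^{\ast}$. Since both intermediate results are already proved, I do not foresee any genuine obstacle; the only thing worth pointing out explicitly in the write-up is that the canonicity of the isomorphism comes from the universal property of the Grothendieck group invoked in Lemma \ref{Lemma 2 two-iki}, so that the isomorphism is induced by the natural monoid map $S\to(S^{-1}R)^{\ast}$, $s\mapsto s/1$.
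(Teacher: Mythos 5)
Your proposal is correct and follows essentially the same route as the paper: injectivity comes from Lemma \ref{Lemma 2 two-iki}, and surjectivity from the fact that in the saturated case every unit of $S^{-1}R$ has the form $s/t$ with $s,t\in S$. The only cosmetic difference is that you cite the preceding lemma for this last fact, whereas the paper reproves it inline (if $r/s$ is a unit then $rr't=ss't\in S$ for some $r'\in R$, $s',t\in S$, hence $r\in\overline{S}=S$).
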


\begin{proof} Let $G=\{[s,t]: s,t\in S\}$ be the Grothendieck group of the multiplicative monoid $S$. Then by Lemma \ref{Lemma 2 two-iki}, the canonical morphism of groups $f:G\rightarrow(S^{-1}R)^{\ast}$ given by $[s,t]\mapsto s/t$ is injective. If $r/s\in(S^{-1}R)^{\ast}$ then $rr't=ss't\in S$ where $r'\in R$ and $s',t\in S$. It follows that $r\in\overline{S}=S$ and so $f$ is surjective. 
\end{proof}

\begin{corollary} If $\mathfrak{p}$ is a prime ideal of a ring $R$, then the Grothendieck group of the multiplicative monoid $R\setminus\mathfrak{p}$ is canonically isomorphic to $(R_{\mathfrak{p}})^{\ast}=
R_{\mathfrak{p}}\setminus\mathfrak{p}R_{\mathfrak{p}}$.
\end{corollary}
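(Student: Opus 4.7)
The plan is to apply Corollary \ref{Coro 7 yedi} directly with the multiplicative set $S = R \setminus \mathfrak{p}$, and then to identify $(S^{-1}R)^{\ast}$ with $R_{\mathfrak{p}} \setminus \mathfrak{p} R_{\mathfrak{p}}$ via the locality of $R_{\mathfrak{p}}$.

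First I would verify that $S = R \setminus \mathfrak{p}$ is a saturated multiplicative set. Multiplicativity is immediate from the fact that $\mathfrak{p}$ is a proper prime ideal: $1 \notin \mathfrak{p}$, and if $a,b \notin \mathfrak{p}$ then $ab \notin \mathfrak{p}$. Saturation is already noted in the paragraph preceding Corollary \ref{Coro 7 yedi}, and in any case can be read off the description $\overline{S} = R \setminus \bigcup_{\mathfrak{q} \cap S = \emptyset} \mathfrak{q}$, since the primes $\mathfrak{q}$ disjoint from $R \setminus \mathfrak{p}$ are precisely those contained in $\mathfrak{p}$, whose union is $\mathfrak{p}$ itself; hence $\overline{S} = R \setminus \mathfrak{p} = S$.

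Second, since by definition $R_{\mathfrak{p}} = S^{-1}R$, Corollary \ref{Coro 7 yedi} immediately produces the canonical isomorphism between the Grothendieck group of the multiplicative monoid $R \setminus \mathfrak{p}$ and $(R_{\mathfrak{p}})^{\ast}$, sending each class $[s,t]$ to the unit $s/t$.

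For the final equality $(R_{\mathfrak{p}})^{\ast} = R_{\mathfrak{p}} \setminus \mathfrak{p} R_{\mathfrak{p}}$, I would invoke (or briefly reprove) the standard fact that $R_{\mathfrak{p}}$ is a local ring with maximal ideal $\mathfrak{p} R_{\mathfrak{p}}$: a fraction $r/s$ with $s \in S$ is invertible in $R_{\mathfrak{p}}$ iff $r \in S = R \setminus \mathfrak{p}$, which is precisely the condition $r/s \notin \mathfrak{p} R_{\mathfrak{p}}$. There is no genuine obstacle here, since the corollary is a one-line specialization of Corollary \ref{Coro 7 yedi}; the only subtlety worth explicitly pointing out is the saturation of $R \setminus \mathfrak{p}$, but even that reduces to a single application of the description of $\overline{S}$ recalled earlier.
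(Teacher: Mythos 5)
Your proposal is correct and follows exactly the paper's route: the paper proves this corollary by a direct appeal to Corollary \ref{Coro 7 yedi}, relying on the saturation of $R\setminus\mathfrak{p}$ noted just before that corollary and on the standard locality of $R_{\mathfrak{p}}$ for the identification $(R_{\mathfrak{p}})^{\ast}=R_{\mathfrak{p}}\setminus\mathfrak{p}R_{\mathfrak{p}}$. You merely spell out the details that the paper leaves implicit.
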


\begin{proof} It follows from Corollary \ref{Coro 7 yedi}.
\end{proof}

For any ring $R$ by $T(R)$ we mean the total ring of fractions of $R$. In fact, $T(R)=S^{-1}R$ where $S$ is the set of non-zero-divisors of $R$. 

\begin{corollary}\label{coro nice gozel} The Grothendieck group of the set of non-zero-divisors of a ring $R$ is canonically isomorphic to $T(R)^{\ast}$.
\end{corollary}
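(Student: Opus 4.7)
The plan is to apply Corollary \ref{Coro 7 yedi} directly to the multiplicative set $S$ of non-zero-divisors of $R$. Since $T(R) = S^{-1}R$ by definition, once we verify that $S$ is saturated the conclusion is immediate: the Grothendieck group of $S$ is canonically isomorphic to $(S^{-1}R)^{\ast} = T(R)^{\ast}$.

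The one thing that needs to be pointed out is the saturation claim, which was already asserted in the paragraph preceding the previous lemma (\emph{``The set of non-zero-divisors of every ring is saturated''}). I would briefly record this verification: if $ab \in S$ for some $a, b \in R$, i.e.\ $ab$ is a non-zero-divisor, and $ac = 0$ for some $c \in R$, then $(ab)c = b(ac) = 0$, so $c = 0$, meaning $a$ is a non-zero-divisor and $a \in S$. Hence $\overline{S} = S$.

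With saturation in hand, Corollary \ref{Coro 7 yedi} yields the canonical isomorphism $f: G \to (S^{-1}R)^{\ast} = T(R)^{\ast}$, where $G = \{[s,t] : s,t \in S\}$ is the Grothendieck group of the multiplicative monoid of non-zero-divisors and $f([s,t]) = s/t$.

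There is no real obstacle here; the statement is packaged as a one-line consequence of the preceding results. The only substantive content is the saturation of the set of non-zero-divisors, and that is an elementary associativity argument.
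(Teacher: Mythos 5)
Your proposal is correct and follows exactly the paper's route: the paper's proof is the one-line ``It follows from Corollary \ref{Coro 7 yedi}'', relying on the earlier assertion that the set of non-zero-divisors is saturated. Your explicit verification of that saturation is a welcome but inessential addition.
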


\begin{proof} It follows from Corollary \ref{Coro 7 yedi}.
\end{proof}

In particular, if $R$ is an integral domain with the field of fractions $F$. Then the Grothendieck group of the multiplicative monoid $R\setminus\{0\}$ is canonically isomorphic to $F^{\ast}=F\setminus\{0\}$. 

\begin{corollary}\label{Coro 8 sekiz} If $S$ is a multiplicative set of a ring $R$, then the Grothendieck group of $\overline{S}$ is canonically isomorphic to $(S^{-1}R)^{\ast}$.
\end{corollary}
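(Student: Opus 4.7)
My plan is to reduce the statement to Corollary \ref{Coro 7 yedi} by showing that passing from $S$ to its saturation $\overline{S}$ does not change the localized ring, and that $\overline{S}$ is itself saturated so Corollary \ref{Coro 7 yedi} applies to it directly.

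First I would check that $\overline{S}$ is saturated, i.e.\ $\overline{\overline{S}}=\overline{S}$. This is immediate from the characterization $\overline{S}=R\setminus\bigcup_{\mathfrak{p}\cap S=\emptyset}\mathfrak{p}$ recorded earlier in the paper: the primes of $R$ disjoint from $\overline{S}$ are precisely those disjoint from $S$, so taking the saturation twice produces the same set. Applying Corollary \ref{Coro 7 yedi} to the saturated multiplicative set $\overline{S}$ then yields a canonical isomorphism between the Grothendieck group of $\overline{S}$ and $(\overline{S}^{-1}R)^{\ast}$.

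Second I would show that the canonical ring map $S^{-1}R\to\overline{S}^{-1}R$ is an isomorphism. Every element of $S$ is invertible in $\overline{S}^{-1}R$ (since $S\subseteq\overline{S}$), giving the canonical map in question by the universal property of $S^{-1}R$. Conversely, by the very definition of $\overline{S}$, every $a\in\overline{S}$ satisfies $a/1\in(S^{-1}R)^{\ast}$, so the universal property of $\overline{S}^{-1}R$ provides a canonical map $\overline{S}^{-1}R\to S^{-1}R$. The two maps are mutually inverse by construction, hence $S^{-1}R\simeq\overline{S}^{-1}R$ canonically, and in particular $(S^{-1}R)^{\ast}\simeq(\overline{S}^{-1}R)^{\ast}$.

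Combining the two canonical isomorphisms produces the desired canonical isomorphism between the Grothendieck group of $\overline{S}$ and $(S^{-1}R)^{\ast}$. I do not expect any real obstacle here: the saturation-stability of the localization and the self-saturated property of $\overline{S}$ are both formal consequences of the prime-avoidance description of $\overline{S}$ already stated in the text, so the whole argument amounts to a two-line invocation of Corollary \ref{Coro 7 yedi}.
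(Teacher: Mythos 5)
Your proposal is correct and follows essentially the same route as the paper: apply Corollary \ref{Coro 7 yedi} to the saturated set $\overline{S}$ and observe that $S^{-1}R\to\overline{S}^{-1}R$ is an isomorphism, hence identifies the unit groups. You merely spell out, via universal properties, the two facts the paper leaves as ``it can be seen.''
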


\begin{proof} Let $G$ be the Grothendieck group of the multiplicative monoid $T:=\overline{S}$. Since $T$ is saturated, then by Corollary \ref{Coro 7 yedi}, $G$ is canonically isomorphic to $(T^{-1}R)^{\ast}$. But it can be seen that the map $S^{-1}R\rightarrow T^{-1}R$ given by $r/s\mapsto r/s$ is an isomorphism of rings and so it induces an isomorphism between the corresponding groups of units. Hence, $G$ is isomorphic to $(S^{-1}R)^{\ast}$. 
\end{proof}

\begin{corollary} If $I$ is an ideal of a ring $R$, then the Grothendieck group of the multiplicative monoid $T=R\setminus\bigcup\limits_
{\substack{\mathfrak{m}\in\Max(R),\\
I\subseteq\mathfrak{m}}}\mathfrak{m}$ is canonically isomorphic to $(S^{-1}R)^{\ast}$ with $S=1+I$.
\end{corollary}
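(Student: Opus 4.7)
The plan is to invoke Corollary \ref{Coro 8 sekiz}, so the entire task reduces to verifying that the saturation $\overline{S}$ of $S=1+I$ coincides with the set $T$. Once this set-theoretic identity is established, the Grothendieck group of $\overline{S}=T$ is canonically isomorphic to $(S^{-1}R)^{\ast}$ by that corollary, which is exactly the desired conclusion.

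To identify $\overline{S}$ with $T$, I would use the description given just before Lemma on saturations, namely $\overline{S}=R\setminus\bigcup_{\mathfrak{p}\cap S=\emptyset}\mathfrak{p}$, where the union runs over primes $\mathfrak{p}\in\Spec(R)$. Thus it suffices to prove the equality of unions
\[
\bigcup_{\substack{\mathfrak{p}\in\Spec(R),\\ \mathfrak{p}\cap(1+I)=\emptyset}}\mathfrak{p}\;=\;\bigcup_{\substack{\mathfrak{m}\in\Max(R),\\ I\subseteq\mathfrak{m}}}\mathfrak{m}.
\]
The key observation is that for a prime $\mathfrak{p}$, the condition $\mathfrak{p}\cap(1+I)=\emptyset$ is equivalent to $1\notin\mathfrak{p}+I$, i.e., to $\mathfrak{p}+I$ being a proper ideal of $R$. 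By Zorn's lemma this happens if and only if there exists a maximal ideal $\mathfrak{m}$ with $\mathfrak{p}+I\subseteq\mathfrak{m}$, i.e., $\mathfrak{p}\subseteq\mathfrak{m}$ and $I\subseteq\mathfrak{m}$.

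From this equivalence the $\subseteq$ inclusion is immediate: any prime $\mathfrak{p}$ disjoint from $1+I$ is contained in some maximal ideal $\mathfrak{m}$ that also contains $I$, and hence $\mathfrak{p}\subseteq\mathfrak{m}$ sits inside the right-hand union. For the reverse inclusion $\supseteq$, note that if $\mathfrak{m}\in\Max(R)$ contains $I$ then $\mathfrak{m}$ itself is a prime with $\mathfrak{m}\cap(1+I)=\emptyset$ (because $1+a\in\mathfrak{m}$ together with $a\in I\subseteq\mathfrak{m}$ would force $1\in\mathfrak{m}$), so $\mathfrak{m}$ appears among the primes on the left. Therefore $\overline{S}=T$, and Corollary \ref{Coro 8 sekiz} finishes the argument.

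I do not expect a serious obstacle here; the only nontrivial content is the reformulation $\mathfrak{p}\cap(1+I)=\emptyset\Leftrightarrow \mathfrak{p}+I\neq R$, and the application of Zorn's lemma to enlarge $\mathfrak{p}+I$ to a maximal ideal. Everything else is a direct citation of Corollary \ref{Coro 8 sekiz}.
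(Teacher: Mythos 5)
Your proposal is correct and follows essentially the same route as the paper: reduce everything to the set-theoretic identity $\overline{S}=T$ for $S=1+I$ and then cite Corollary \ref{Coro 8 sekiz}. The only (immaterial) difference is that you verify $\overline{S}=T$ via the description of $\overline{S}$ as the complement of the union of primes disjoint from $S$, using the equivalence $\mathfrak{p}\cap(1+I)=\emptyset\Leftrightarrow\mathfrak{p}+I\neq R$, whereas the paper argues one inclusion by showing directly that $x/1$ is invertible in $S^{-1}R$ for $x\in T$; both verifications are sound.
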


\begin{proof} We first show that $\overline{S}=T$. It is clear that $\overline{S}\subseteq T$. If $x\in T$ then $x/1$ is invertible in $S^{-1}R$. If not, then $x\in\mathfrak{m}$ for some maximal ideal $\mathfrak{m}$ of $R$ with $\mathfrak{m}\cap S=\emptyset$. If $I$ is not contained in $\mathfrak{m}$ then $\mathfrak{m}+I=R$ and so $\mathfrak{m}\cap S\neq\emptyset$ which is a contradiction. Hence, $I\subseteq\mathfrak{m}$. But this is also contradiction with the choice of $x\in T$. Thus $x/1\in(S^{-1}R)^{\ast}$ and so $x\in\overline{S}$. Now the assertion follows from Corollary \ref{Coro 8 sekiz}.
\end{proof}

%\textbf{Conflict of Interest:} The author confirms that he has no Conflict of Interest. Also there is no Data Availability applicable. \\

%\textbf{Competing Interests:} No competing interests exist, and hence the author declares none.  \\ 


\begin{thebibliography}{10}
\bibitem{Lam}
T.Y. Lam, A First Course in Noncommutative Rings, Springer-Verlag, (2001).
\bibitem{Levi}
F.W. Levi, Ordered groups, Proc. Indian Acad. Sci. A \textbf{16}(4) (1942) 256-263.
\bibitem{Northcott}
D.G. Northcott, Lessons on Rings, Modules and Multiplicities, Cambridge University Press (1968).
\bibitem{Abolfazl Tarizadeh}
A. Tarizadeh, Kaplansky's conjectures and homogeneity of zero-divisors, units and idempotents in a graded ring, arXiv.2309.02880 (2024).
\end{thebibliography}
\end{document}